\begin{document}\baselineskip 0.8cm

\newtheorem{lem}{Lemma}[section]
\newtheorem{thm}[lem]{Theorem}
\newtheorem{cor}[lem]{Corollary}
\newtheorem{exa}[lem]{Example}
\newtheorem{con}[lem]{Conjecture}
\newtheorem{rem}[lem]{Remark}
\newtheorem{obs}[lem]{Observation}
\newtheorem{definition}[lem]{Definition}
\newtheorem{pro}[lem]{Proposition}
\theoremstyle{plain}
\newcommand{\D}{\displaystyle}
\newcommand{\DF}[2]{\D\frac{#1}{#2}}

\renewcommand{\figurename}{{\bf Fig}}
\captionsetup{labelfont=bf}

\title{Hardness result for the total rainbow $k$-connection of graphs\footnote{Supported by NSFC No.11371205 and 11531011.}}
\author{\small Wenjing Li, Xueliang Li, Di Wu\\
      {\it\small $^1$ Center for Combinatorics and LPMC-TJKLC}\\
      {\it\small Nankai University, Tianjin 300071, P.R. China}\\
       {\it\small liwenjing610@mail.nankai.edu.cn; lxl@nankai.edu.cn; wudiol@mail.nankai.edu.cn}}
\date{}
\maketitle

\begin{abstract}
A path in a total-colored graph is called \emph{total rainbow} if its edges and internal vertices
have distinct colors.
For an $\ell$-connected graph $G$ and an integer $k$ with $1\leq k \leq\ell$,
the \emph{total rainbow $k$-connection number} of $G$,
denoted by $trc_k(G)$,
is the minimum number of colors used in a total coloring of $G$
to make $G$ \emph{total rainbow $k$-connected}, that is,
any two vertices of $G$ are connected by $k$ internally vertex-disjoint total rainbow paths.
In this paper, we study the computational complexity of 
total rainbow $k$-connection number of graphs.
We show that it is NP-complete to decide whether $trc_k(G)=3$.
\\[2mm]

\noindent{\bf Keywords:} total rainbow $k-$connection number, computational complexity.

\noindent{\bf AMS subject classification 2010:} 05C15, 05C40, 68Q17, 68Q25, 68R10.
\end{abstract}

\section{Introduction}
All graphs considered in this paper are simple, finite, undirected and connected.
We follow the terminology and notation of
Bondy and Murty\cite{Bondy} for those not defined here.
A set of internally vertex-disjoint paths are called \emph{disjoint}.
Let $G$ be a nontrivial connected graph
with an \emph{edge-coloring c} $:E(G)\rightarrow \{0,1,\dots,t\}$, $t\in \mathbb{N}$,
where adjacent edges may be colored the same.
A path in $G$ is called \emph{a rainbow path}
if no two edges of the path are colored the same.
The graph $G$ is called \emph{rainbow connected}
if for any two vertices of $G$,
there is a rainbow path connecting them.
The \emph{rainbow connection number} of $G$,
denoted by $rc(G)$,
is defined as the minimum number of colors that are needed
to make $G$ rainbow connected.
If $G$ is an $\ell$-connected graph with $\ell\geq1$,
then for any integer $1\leq k\leq \ell$,
$G$ is called \emph{rainbow $k$-connected}
if any two vertices of $G$ are connected by $k$ disjoint rainbow paths.
The \emph{rainbow $k$-connection number} of $G$,
denoted by $rc_k(G)$,
is the minimum number of colors that are required
to make $G$ rainbow $k$-connected.
The concepts of rainbow connection and rainbow $k$-connection of graphs
were introduced by Chartrand et al. in \cite{Char2,Char1},
and have been well-studied since then.
For further details, we refer the readers to the book\cite{Li}.

Let $G$ be a nontrivial connected graph
with a \emph{vertex-coloring c} $:V(G)\rightarrow \{0,1,\dots,t\}$, $t\in \mathbb{N}$,
where adjacent vertices may be colored the same.
A path in $G$ is called \emph{a vertex-rainbow path}
if no interval vertices of the path are colored the same.
The graph $G$ is \emph{rainbow vertex-connected}
if for any two vertices of $G$,
there is a vertex-rainbow path connecting them.
The \emph{rainbow vertex-connection number} of $G$,
denoted by $rvc(G)$,
is the minimum number of colors used in a vertex-coloring of $G$
to make $G$ rainbow vertex-connected.
If $G$ is an $\ell$-connected graph with $\ell\geq1$,
then for any integer $1\leq k\leq \ell$,
the graph $G$ is \emph{rainbow vertex $k$-connected}
if any two vertices of $G$ are connected by $k$ disjoint vertex-rainbow paths.
The \emph{rainbow vertex $k$-connection number} of $G$,
denoted by $rvc_k(G)$,
is the minimum number of colors that are required
to make $G$ rainbow vertex k-connected.
These concepts of rainbow vertex connection
and rainbow vertex $k$-connection of graphs
were proposed by Krivelevich and Yuster\cite{Yuster} and Liu et al.\cite{H. L1}, respectively.

Liu et al.\cite{H. L2} introduced the analogous concepts of total rainbow
$k$-connection of graphs.
Let $G$ be a nontrivial $\ell$-connected graph
with a \emph{total-coloring c} $:E(G)\cup V(G)
\rightarrow \{0,1,\dots,t\}$, $t\in \mathbb{N}$, where $\ell\geq1$.
A path in $G$ is called \emph{a total-rainbow path}
if its edges and interval vertices have distinct colors.
For any integer $1\leq k\leq\ell$,
the graph $G$ is called \emph{total rainbow $k$-connected}
if any two vertices of $G$ are connected by $k$ disjoint total-rainbow paths.
The \emph{total rainbow $k$-connection number} of $G$,
denoted by $trc_k(G)$,
is the minimum number of colors that are needed
to make $G$ total rainbow $k$-connected.
When $k=1$, we simply write $trc(G)$, just like
$rc(G)$ and $rvc(G)$.
From Liu et al.\cite{H. L2}, we have that $trc(G)=1$ if and only if $G$ is a complete graph,
and $trc(G)\geq 3$ if $G$ is not complete.
If $G$ is an $\ell$-connected graph with $\ell\geq1$,
then $trc_k(G)\geq 3$ if $2\leq k\leq \ell$,
and $trc_k(G)\geq 2diam(G)-1$ for $1\leq k\leq \ell$, where $diam(G)$ denotes the diameter of $G$.
In relation to $rc_k(G)$ and $rvc_k(G)$, they have $trc_k(G)\geq max(rc_k(G),rvc_k(G))$.
Also, if $rc_k(G)=2$, then $trc_k(G)=3$.
If $rvc_k(G)\geq 2$, then $trc_k(G)\geq 5$.

The computational complexity of the rainbow connectivity and vertex-connectivity has been
attracted much attention. In \cite{Chakraborty}, Chakraborty et al. proved
that deciding whether $rc(G)=2$ is NP-Complete. Analogously, Chen et al.\cite{Chen} showed that it is NP-complete
to decide whether $rvc(G)=2$. Motivated by \cite{Chakraborty,Chen}, we consider the computational complexity of
computing the total rainbow $k$-connectivity $trc_k(G)$ of a graph $G$. For $k=1$, Chen et al. recently gave reductions to prove that
it is NP-complete to decide whether $trc(G)=3$ in \cite{Lily chen}. In this paper, we prove that for any fixed $k\geq 1$ it is NP-complete to
decide whether $trc_k(G)=3$. The reduction of our proof is different from that in \cite{Lily chen}.

\section{Main results}

In the following, we will show that deciding whether $trc_k(G)=3$ is NP-complete for fixed $k\geq 1$.

\begin{thm}\label{thm1}
Given a graph $G$, deciding whether $trc_k(G)=3$ is NP-Complete for fixed $k\geq 1$.
\end{thm}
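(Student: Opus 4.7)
The plan is to establish the theorem in two parts: membership in NP and NP-hardness via a polynomial-time reduction from $3$-SAT.

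For NP membership, I would first observe that with only three available colors any total rainbow path has length at most $2$: a path of length $m$ requires $2m-1$ pairwise distinct colors (one per edge and one per internal vertex), and $2m-1\le 3$ forces $m\le 2$. Consequently, given a candidate $3$-total-coloring $c$, verifying that $c$ witnesses $trc_k(G)\le 3$ reduces to checking, for every pair $\{u,v\}\subseteq V(G)$, that there exist $k$ internally vertex-disjoint paths of length at most $2$ between $u$ and $v$ whose edges and unique internal vertex (if any) carry pairwise distinct colors. Candidate paths for each pair are enumerated in polynomial time and a system of $k$ disjoint ones extracted via a standard matching-type argument, so the decision problem lies in NP.

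For hardness, I would reduce from $3$-SAT, as is standard for results of this flavour (compare the NP-completeness of $rc(G)=2$ and $rvc(G)=2$ recalled in the introduction). Given a formula $\phi$ with variables $x_1,\dots,x_n$ and clauses $C_1,\dots,C_m$, I construct a graph $G_\phi$ assembled from three kinds of pieces. The first is a \emph{variable gadget} for each $x_i$, built so that the three available colors partition naturally into a ``true'' color and a ``false'' color (with the third color playing an auxiliary role). The second is a \emph{clause gadget} for each $C_j$, attached to the variable gadgets corresponding to its three literals, engineered so that the essential total rainbow path in the clause gadget can be $3$-total-coloured if and only if at least one of its literals has been set to true. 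The third is a \emph{$k$-thickening} layer: for every pair of vertices whose connectivity would otherwise be marginal, I attach a private pool of $k-1$ auxiliary common neighbours, together with predetermined adjacencies arranged so that these neighbours can be coloured in a rigid pattern providing $k-1$ additional disjoint total rainbow paths of length $2$. Since $G_\phi$ will contain non-adjacent vertices by design, the lower bound $trc_k(G_\phi)\ge 3$ holds unconditionally by the facts recalled from Liu et al., so the reduction needs only to establish the equivalence $trc_k(G_\phi)\le 3\iff \phi\text{ is satisfiable}$.

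The principal obstacle, and the step on which I would spend most of the technical effort, is controlling the interaction between the $k$-thickening layer and the variable/clause gadgets. The auxiliary vertices must genuinely contribute $k-1$ disjoint total rainbow paths between each critical pair without ever forcing a fourth colour, yet they must not create unintended short total rainbow paths that would repair an unsatisfiable instance by bypassing the clause gadgets. I plan to achieve this by keeping each thickening pool private to its pair and constraining the outside adjacencies of its vertices so that the thickening layer is ``transparent'' from the point of view of the other gadgets; the colours of the edges incident to each pool vertex are then forced up to symmetry, so that no clause constraint can be circumvented. Once this rigidity is verified, the forward and backward directions of the equivalence follow by reading off a satisfying assignment from the colours of the literal vertices, and conversely by extending a satisfying assignment to a $3$-total-colouring via the templates dictated by each gadget, completing the reduction for every fixed $k\ge 1$.
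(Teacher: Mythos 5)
Your plan is a sketch rather than a proof, and the place where it stops is exactly where the real difficulty of this theorem lies. The NP-membership part is fine (and your observation that three colors force every total rainbow path to have length at most $2$ is correct and useful), but the hardness part never produces the gadgets: the clause gadget is only described by the property it is supposed to have (``can be $3$-total-coloured if and only if at least one of its literals is true''), and the rigidity of the ``$k$-thickening'' layer is asserted as a goal to be ``verified'' later. Since designing and verifying those gadgets is the entire content of the reduction, the proposal as written does not establish the theorem.

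Moreover, there is a concrete structural obstacle that your sketch does not confront and that, I suspect, would defeat the ``private pool of $k-1$ auxiliary common neighbours'' idea in the form you describe. Because every total rainbow path under a $3$-total-coloring has length at most $2$, a graph with $trc_k=3$ must have, for \emph{every} pair of nonadjacent vertices, at least $k$ common neighbours, and the coloring must make $k$ of these length-$2$ paths simultaneously rainbow; in particular the auxiliary pool vertices themselves must be total rainbow $k$-connected to \emph{all} other vertices of $G_\phi$, not just to the pair they serve. Keeping the pools ``private'' with few outside adjacencies is therefore not an option: it would violate $k$-connectivity and, worse, leave pool vertices with no $k$ disjoint short rainbow paths to distant gadget vertices. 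This is precisely why the paper does not attempt a direct reduction from $3$-SAT to the full problem: it first introduces a subset version (where only specified pairs must be total rainbow $k$-connected) and a partial-coloring-extension version, reduces $3$-SAT to the latter, and then lifts back to the full problem by attaching copies of $K_{(k+1)^2}$ coloured with the explicit $2$-edge-coloring of Chartrand et al.\ (Lemma~\ref{lem1}), whose vertices have $2k$ incident edges of one color and $k^2$ of the other and thus supply the required $k$ disjoint short rainbow paths between all the ``background'' pairs without interfering with the SAT-encoding pairs. To repair your approach you would either have to rediscover some such dense, rigidly colourable attachment structure, or adopt the paper's chain Problem~3 $\preceq$ Problem~2 $\preceq$ Problem~1; in either case substantial construction and verification work remains before your outline becomes a proof.
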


We first define the following three problems.

\noindent{\bf Problem 1.} The total rainbow connection number $3$.

\noindent Given: Graph $G=(V,E)$.

\noindent Decide: Whether there is a total coloring of $G$ with $3$ colors
such that all the pairs $\{u,v\}\in (V\times V)$ are total rainbow $k$-connected?

\noindent{\bf Problem 2.} The subset total rainbow $k$-connection number $3$.

\noindent Given: Graph $G=(V,E)$ and a set of pairs $P\subseteq (V\times V)$,
where $P$ contains nonadjacent vertex pairs.

\noindent Decide: Whether there is a total-coloring of $G$ with $3$ colors
such that all the pairs $\{u,v\}\in P$ are total rainbow $k$-connected?

\noindent{\bf Problem 3.} The subset partial edge-coloring.

\noindent Given: Graph $G=(V,E)$ with a set of pairs $Q\subseteq V\times V$
where $Q$ contains nonadjacent vertex pairs,
and a partial $2$-edge-coloring $\hat{\chi}$ for $\hat{E}\subset E$.

\noindent Decide: Whether $\hat{\chi}$ can be extended to a $3$-total-coloring
$\chi$ of $G$ that makes all the pairs in $Q$ total rainbow $k$-connected and
$\chi(e)\notin \{\chi(u),\chi(v)\}$ for all $e=uv\in \hat{E}$?

In the following, we first reduce Problem 2 to Problem 1,
and then reduce Problem 3 to Problem 2.
Finally, Theorem \ref{thm1} is completed by reducing $3$-SAT to Problem 3.

Before proving Theorem \ref{thm1}, we need an useful result shown in \cite{Char2}.

\begin{lem}\cite{Char2}\label{lem1}
For every $k\geq 2$, $rc_k(K_{(k+1)^2})=2$.
Furtherly, the following 2-edge coloring can make $G$ rainbow $k$-connected.
Let $G_1,G_2,\ldots,G_{k+1}$ be mutually vertex-disjoint graphs,
where $V(G_i)=V_i$, such that $G_i=K_{k+1}$ for $1\leq i\leq k+1$.
Let $V_i=\{v_{i,1},v_{i,2},\ldots,v_{i,k+1}\}$ for $1\leq i\leq k+1$.
Let $G$ be the join of the graphs $G_1,G_2,\ldots,G_{k+1}$.
Thus $G=K_{(k+1)^2}$ and $V(G)=\cup^{k+1}_{i=1}V_i$.
We assign the edge $uv$ of $G$ the color $0$ if either $uv\in E(G_i)$
for some $i(1\leq i\leq k+1)$ or if $uv=v_{i,l}v_{j,l}$ for some $i,j,l$ with $1\leq i,j,l\leq k+1$
and $i\neq j$.
All other edges of $G$ are assigned the color $1$.

For $k=1$, since $rc_1(K_{(k+1)^2})$=1, the above coloring surly makes $G$ rainbow $1$-connected.
\end{lem}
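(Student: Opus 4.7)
The plan is to prove $rc_k(K_{(k+1)^2})=2$ for $k\geq 2$ by separately establishing the lower bound $\geq 2$ and the upper bound $\leq 2$, with the latter verified by checking that the explicit $2$-coloring described in the statement yields $k$ internally vertex-disjoint rainbow paths between every pair of vertices. The case $k=1$ is immediate, since a single edge of a complete graph is trivially a rainbow path, so any coloring whatsoever already makes $G$ rainbow $1$-connected.

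For the lower bound when $k\geq 2$, I would observe that with only one color available every rainbow path consists of a single edge. Hence between any two vertices there is exactly one rainbow path, the direct edge, which is strictly fewer than the required $k\geq 2$ disjoint rainbow paths; so at least two colors are needed.

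For the upper bound, I would perform a case analysis on the pair of endpoints $u=v_{i,l}$ and $v=v_{i',l'}$ under the given $2$-coloring. In Case A, where $i=i'$ and $l\neq l'$, the direct edge has color $0$, and for each $j\in\{1,\dots,k+1\}\setminus\{i\}$ the $2$-path $v_{i,l}\to v_{j,l'}\to v_{i,l'}$ uses a color-$1$ edge (first and second indices both differ) followed by a color-$0$ edge (same second index $l'$), giving $k$ internally disjoint rainbow detours. In Case B, where $i\neq i'$ and $l=l'$, the direct edge has color $0$, and for each $m\in\{1,\dots,k+1\}\setminus\{l\}$ the path $v_{i,l}\to v_{i,m}\to v_{i',l}$ uses a color-$0$ edge (same first index $i$) followed by a color-$1$ edge, yielding $k$ further disjoint rainbow $2$-paths.

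The main obstacle is Case C, where $i\neq i'$ and $l\neq l'$: the direct edge now has color $1$, and the obvious $2$-paths through $v_{i,l'}$ or $v_{i',l}$ both use two color-$0$ edges and therefore fail to be rainbow. I would instead route detours through intermediate vertices $v_{j,l}$ and $v_{j,l'}$ for each $j\notin\{i,i'\}$: the path $v_{i,l}\to v_{j,l}\to v_{i',l'}$ has edge colors $0,1$, while $v_{i,l}\to v_{j,l'}\to v_{i',l'}$ has edge colors $1,0$, and both are rainbow. Together with the direct $uv$ edge this produces $2(k-1)+1$ internally vertex-disjoint rainbow paths, which is at least $k$ for $k\geq 2$, completing the verification.
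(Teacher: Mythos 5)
Your proof is correct: the one-color lower bound, and the three-case check (same block; same second index; both indices different, routed through $v_{j,l}$ and $v_{j,l'}$ for $j\notin\{i,i'\}$, giving $2(k-1)+1\geq k$ disjoint rainbow paths) verifies the stated coloring. The paper itself gives no proof of this lemma, quoting it from the cited reference of Chartrand et al., and your case analysis is essentially the standard argument used there, so there is nothing further to compare.
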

Note that from the above coloring, for every vertex $v\in V(G)$,
we have $d(v)=k^2+2k$, $2k$ edges incident with $v$ colored with $0$,
and $k^2$ edges incident with $v$ colored with $1$.

\begin{lem}\label{lem2}
Problem 2 $\preceq$ Problem 1.
\end{lem}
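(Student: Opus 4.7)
Our goal is a polynomial-time construction that sends an instance $(G, P)$ of Problem 2 to a graph $G'$ with $trc_k(G') = 3$ (i.e., YES for Problem 1) if and only if $(G, P)$ is a YES instance of Problem 2. The key preliminary fact is that in any $3$-total-coloring every total rainbow path has length at most $2$: a length-$\ell$ path has $\ell + (\ell-1) = 2\ell-1$ edge and internal-vertex colors that must be pairwise distinct, forcing $\ell \leq 2$. Hence $k$-total-rainbow-connectivity must be realized by direct edges together with length-$2$ paths $u\!-\!w\!-\!v$ whose two edge colors and the vertex color of $w$ form a permutation of $\{0,1,2\}$.

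The intended construction adjoins to $G$ a universal gadget $H$ that covers every pair outside $P$ with $k$ disjoint length-$2$ total rainbow paths, while the coloring on $G$ itself is left free (up to a few pre-specified constraints) to discharge the obligations for $P$-pairs. The natural template, dictated by Lemma \ref{lem1}, is to take $H$ to be (possibly several copies of) $K_{(k+1)^2}$ pre-colored as in the lemma (edges in $\{0,1\}$), to assign color $2$ to every vertex of $H$, and to join each $u \in V(G)$ to each $h \in V(H)$ by a bridge edge. The bridge colors are drawn from $\{0,1\}$ via a ``signature'' map $u \mapsto s_u \in \{0,1\}^{V(H)}$; the length-$2$ path $u\!-\!h\!-\!v$ is total rainbow iff $s_u(h) \neq s_v(h)$, so the number of eligible paths through $H$ for a given pair equals the Hamming distance between the two signatures.

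For the forward direction, any $3$-total-coloring of $G$ witnessing Problem 2 extends to $G'$ via the pre-specified colors on $H$ and on the bridges. Pairs in $P$ remain total rainbow $k$-connected by the original $G$-paths; non-$P$ pairs in $V(G)^2$ obtain $k$ disjoint bridge paths through $H$; and pairs involving $V(H)$ are handled by the internal rainbow $k$-connectedness of $H$ from Lemma \ref{lem1} together with bridge paths, after assigning vertices of $V(G)$ an auxiliary color that avoids conflicts along the short routes we use.

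For the reverse direction, restricting a valid $3$-total-coloring of $G'$ to $G$ yields a candidate coloring for Problem 2; one must verify that pairs in $P$ still admit $k$ disjoint total rainbow paths \emph{inside} $G$. This is the principal obstacle: the $k$ disjoint paths for $\{u,v\} \in P$ guaranteed in $G'$ may route through $V(H)$, and those vertices disappear upon restriction. The fix is to calibrate the signatures $s_u$, $s_v$ for pairs in $P$ so that their mutual Hamming distance is too small to support $k$ disjoint total rainbow paths through $H$---forcing the required paths into $G$---while keeping the distance at least $k$ for every pair outside $P$ and preserving the coloring of $H$ itself. Designing such a signature (possibly layered across several gadget copies so that the length of the signatures grows with $|V(G)|$) while keeping the whole construction polynomial in $|V(G)|$, and verifying that the nonadjacency assumption on $P$-pairs suffices to make the restriction argument go through, is the core technical step of the proof.
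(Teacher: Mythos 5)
There is a genuine gap, and it is exactly at the step you flag as ``the core technical step.'' An instance of Problem 1 is a \emph{graph only}; the reduction gets to choose the structure of $G'$, but not any part of its coloring. Your reverse direction, however, relies on ``calibrating the signatures'' $s_u$, i.e.\ on prescribing the colors of the bridge edges so that $P$-pairs have small Hamming distance. Those bridge colors are part of the $3$-total-coloring of $G'$, which in the reverse direction is an \emph{arbitrary} witness coloring handed to you; you have no control over it. Concretely, if every $u\in V(G)$ is joined to every $h\in V(H)$, then for \emph{any} pair $\{u,v\}$, including pairs in $P$, a witness coloring can pick $k$ gadget vertices $h_1,\dots,h_k$, color them $2$, color $uh_i$ with $0$ and $vh_i$ with $1$, and thereby supply $k$ disjoint total rainbow paths through $H$. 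So a NO instance of Problem 2 can easily map to a graph $G'$ with $trc_k(G')=3$, and the reduction is not sound. Note also that weakening the gadget so that a $P$-pair has few but nonzero common gadget neighbors does not help: if they share $t$ with $1\le t<k$, you only force $k-t$ of the paths into $G$, which is not enough.

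The fix must be purely structural, and this is what the paper does: each vertex $v$ gets its own private clique $V_v$ of size $(k+1)^2$ (joined only to $v$), and only pairs $\{u,v\}\notin P$ get a shared clique $V_{(u,v)}$ joined to both $u$ and $v$; all new vertices together induce a clique so that gadget--gadget and gadget--$G$ pairs are served, with Lemma \ref{lem1} supplying the $2$-edge-coloring inside each $(k+1)^2$-clique for the extension direction. Since pairs in $P$ are nonadjacent and, by construction, have \emph{no} common neighbor outside $V(G)$, and since (as you correctly observe) under $3$ colors every total rainbow path has length at most $2$, every one of the $k$ disjoint total rainbow paths between a $P$-pair in $G'$ is forced to lie entirely inside $G$; restricting the coloring to $G$ then solves Problem 2. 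Your forward direction (extending a good coloring of $G$ over the gadget) is essentially the same as the paper's and is fine, but as written the proposal does not yield a correct reduction, because its only mechanism for forcing $P$-pair paths into $G$ is one the reduction is not allowed to use.
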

\begin{proof}
Given a graph $G=(V,E)$ and a set of pairs $P\subseteq V\times V$
where $P$ contains nonadjacent vertex pairs,
we construct a graph $G'=(V',E')$ as follows.
For every vertex $v\in V$, we introduce a new vertex set $V_v=\{x_{(v,1)},x_{(v,2)},\ldots,x_{(v,(k+1)^2)}\}$,
and for every pair $\{u,v\}\in (V\times V)\setminus P$,
we introduce a new vertex set $V_{(u,v)}=\{x_{(u,v,1)},x_{(u,v,2)},\ldots,x_{(u,v,(k+1)^2)}\}$.
We set
\begin{align*}
V'=V\cup \{V_v: v\in V\}\cup \Big\{V_{(u,v)}: \{u,v\}\in (V\times V)\setminus P\Big\}
\end{align*}
and
\begin{align*}
E'= E & \cup \{vx_{(v,i)}: v\in V, x_{(v,i)}\in V_{v}\}\\
&\cup \Big\{\{ux_{(u,v,i)},vx_{(u,v,i)}\}: \{u,v\}\in (V\times V)\setminus P, x_{(u,v,i)}\in V_{(u,v)}\Big\}\\
& \cup \{xx': x,x'\in V'\setminus V\}.\\
\end{align*}
It remains to verify that $G'$ is $3$-total rainbow $k$-connected if and only if there is a total-coloring of $G$
with $3$ colors such that all the pairs $\{u,v\}\in P$ are total rainbow $k$-connected.
In one direction,
suppose that $G'$ is $3$-total rainbow $k$-connected.
Notice that when $G$ is considered as a subgraph of $G'$, no pair of vertices of $G$ that appear in $P$
has a path of length two in $G'$ that is not fully contained in $G$.
Then with this coloring, all the pairs $\{u,v\}\in P$ are total rainbow $k$-connected in $G$.

In the other direction, suppose that $\chi: V\cup E\rightarrow\{0,1,2\}$ is a total-coloring of $G$
that makes all the pairs in $P$ total rainbow $k$-connected.
We now extend it to a total rainbow $k$-connection coloring $\chi': V'\cup E'\rightarrow \{0,1,2\}$,
$\chi'(x)=2$ for all $x\in V'\setminus V$;
$\chi'(v,x_{(v,i)})=1$ for all $v\in V$ and $x_{(v,i)}\in V_v$;
$\chi'(u,x_{(u,v,i)})=0,\chi'(v,x_{(u,v,i)})=1$ for all $\{u,v\}\in (V\times V)\setminus P$
and all $x_{(u,v,i)}\in V_{(u,v)}$.
The edges in $G'[V_v]$ or $G'[V_{(u,v)}]$ are colored with $\{0,1\}$ as Lemma \ref{lem1}
for all $v\in V$ and all $\{u,v\}\in (V\times V)\setminus P$.
Finally, the remaining uncolored edges are colored with $0$.
Now we show that $G'$ is total rainbow $k$-connected under this coloring.
For $\{u,v\}\in P$, the $k$ disjoint total rainbow paths in $G$ connecting $u$ and $v$ are also
$k$-disjoint total rainbow paths in $G'$.
For $\{u,v\}\in (V\times V)\setminus P$, $\{ux_{(u,v,1)}v,ux_{(u,v,2)}v,\ldots,ux_{(u,v,k)}v\}$
are $k$ disjoint total rainbow paths.
For $u\in V,v\in V'\setminus V$,
if $v\notin V_u$, then $\{ux_{(u,1)}v,ux_{(u,2)}v,\ldots,ux_{(u,k)}v\}$
are $k$ disjoint total rainbow paths;
if $v\in V_u$, from Lemma \ref{lem1},
we have $2k>k$ edges incident with $v$ are colored with $0$ in $G'[V_u]$.
Suppose that $\{v_1,v_2,\ldots,v_k\}$ are $k$ vertices adjacent to $v$ by these edges colored with $0$,
then $\{uv_1v,uv_2v,\ldots,uv_kv\}$ are $k$ disjoint total rainbow paths.
For $\{x,x'\}\in (V_u\times V_u)$ or $(V_{(u,v)}\times V_{(u,v)})$,
by Lemma \ref{lem1}, there are $k$ disjoint total rainbow paths in $G'[V_u]$ or $G'[V_{(u,v)}]$
connecting $u$ and $v$.
For the remaining pairs $\{x,x'\}$,
suppose w.l.o.g that $x\in V_u$ and $x'\in V_v(u\neq v)$.
By Lemma \ref{lem1}, we have $k^2>k$ edges incident with $v$ are colored with $1$ in $G'[V_v]$.
Suppose that $\{v'_1,v'_2,\ldots,v'_k\}$ are $k$ vertices adjacent with $v$ by these edges colored with $1$,
then $\{uv'_1v,uv'_2v,\ldots,uv'_kv\}$ are $k$ disjoint total rainbow paths.
Hence ${\chi'}$ is indeed a total $k$-rainbow coloring of $G'$.
\end{proof}

\begin{lem}\label{lem3}
Problem $3\preceq$ Problem $2$.
\end{lem}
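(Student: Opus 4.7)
The plan is to reduce an instance $(G, Q, \hat\chi)$ of Problem~3 to an instance $(G', P)$ of Problem~2 by encoding the partial edge-colouring constraints as auxiliary structural requirements. Starting from $G' := G$ and $P := Q$, for each pre-coloured edge $e = uv \in \hat E$ with $\hat\chi(e) = c$ I would attach a small local gadget $H_e$ of fresh vertices and edges and add suitably chosen nonadjacent pairs to $P$. The gadget $H_e$ is designed so that, in every $3$-total-colouring of $G'$ that total rainbow $k$-connects the pairs in $P$, the edge $e$ must receive colour $c$ and moreover $\chi(e)\notin\{\chi(u),\chi(v)\}$.

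The design of $H_e$ rests on the observation that in a $3$-total-colouring every total rainbow path has length at most $2$: a path of length $3$ would need three distinct edge colours together with two distinct internal-vertex colours, five values drawn from only three colours. Hence for any nonadjacent pair $(x,y)\in P$ the $k$ required paths are all of length $2$, and on each such path the triple (first edge, internal vertex, second edge) must exhaust $\{0,1,2\}$. I would exploit this rigidity by letting $H_e$ supply a fresh neighbour $z_e$ of $u$ and by putting the nonadjacent pair $(z_e,v)$ into $P$: one of the $k$ length-$2$ total rainbow $z_e$--$v$ routes is then forced to be $z_e$--$u$--$v$, so that the triple $(\chi(z_eu),\chi(u),\chi(e))$ exhausts all three colours and in particular $\chi(e)\neq\chi(u)$. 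A symmetric gadget attached at $v$ forces $\chi(e)\neq\chi(v)$, and by cross-linking $H_e$ with auxiliary vertices whose own pair-constraints pin the adjacent edges to specific colours, $\chi(e)$ can be driven to the prescribed value $c\in\{0,1\}$. When $k\geq 2$ the remaining $k-1$ disjoint routes are supplied by additional common neighbours of $z_e$ and $v$ inside $H_e$, coloured in the spirit of Lemma~\ref{lem1}.

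For correctness, in the forward direction I would take any valid extension $\chi$ from a yes-instance of Problem~3 and colour each gadget $H_e$ by a fixed template that supplies the required $k$ internally disjoint length-$2$ total rainbow routes between each new nonadjacent pair; the non-degeneracy $\chi(e)\notin\{\chi(u),\chi(v)\}$ of the pre-coloured edges guarantees the template is consistent with the already-given colours on $G$. In the reverse direction, from any valid $3$-total-colouring $\chi'$ of $G'$ I would restrict to $V(G)\cup E(G)$ and use the forcing property of each $H_e$ to conclude that the restriction equals $\hat\chi$ on $\hat E$, satisfies the endpoint inequalities, and keeps every pair in $Q\subseteq P$ total rainbow $k$-connected inside $G$.

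The main obstacle will be the precise gadget bookkeeping: simultaneously forcing $\chi(e)=c$, forbidding the value $c$ at both endpoints of $e$, supplying $k$ internally disjoint length-$2$ routes purely inside $H_e$, and avoiding unintended length-$2$ shortcuts that could short-circuit the constraints of neighbouring gadgets or of pairs in $Q$. Managing this uniformly in $k$, and verifying non-interference when many pre-coloured edges coexist, is the delicate step.
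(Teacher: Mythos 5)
Your overall strategy is the same as the paper's: exploit that in a $3$-total-coloring any total rainbow path between nonadjacent vertices has length exactly $2$, attach forcing vertices whose pair-constraints pin down edge and vertex colors along the unique (forced) middle vertex, and supply the remaining $k-1$ disjoint routes for each auxiliary pair by extra common neighbours. The pendant vertex $z_e$ with the pair $(z_e,v)$ is exactly the role played by the paper's vertices $d^j_e$ with the pairs $\{d^j_e,e^{(3-j)}\}$, and your extra common neighbours correspond to the paper's vertices $g_{(u,v,t)}$.

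However, there is a genuine gap in the part you leave as ``bookkeeping''. You claim each \emph{local} gadget $H_e$, built from fresh vertices, forces $\chi(e)$ to equal the prescribed value $c\in\{0,1\}$. No gadget can force an absolute colour value, since any permutation of the three colours preserves total rainbow $k$-connectivity; what must be forced is the \emph{partition}: all edges pre-coloured $0$ receive one common colour, all edges pre-coloured $1$ receive another, and these two colours differ, consistently across the whole graph. Independent per-edge gadgets on disjoint fresh vertex sets cannot achieve this cross-edge synchronization: a colouring could legitimately give two edges of $\hat{E}_1$ different colours while satisfying every local constraint. The paper resolves this with a \emph{globally shared} reference structure: single vertices $c,b_1,b_2$ with the pair $\{b_1,b_2\}$ forcing $\chi'(b_1c)\neq\chi'(b_2c)$, and every edge gadget is wired through $c$ (pairs $\{b_i,c^j_e\}$, $\{f^j_e,c\}$, $\{f^j_e,e^j\}$), so that the forcing chain yields $\chi'(e)=\chi'(b_{3-i}c)$ for every $e\in\hat{E}_i$; this makes ``colour of $\hat{E}_1$'' and ``colour of $\hat{E}_2$'' globally well defined and distinct, after which one may rename colours so the extension agrees with $\hat{\chi}$ literally. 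Your sketch needs this shared reference (or an equivalent mechanism linking the gadgets of different pre-coloured edges) before the reverse direction of the reduction is sound; the rest of your plan (the $\chi(e)\notin\{\chi(u),\chi(v)\}$ forcing, the $k-1$ extra internal vertices, and the check that no new length-$2$ shortcuts arise for pairs in $Q$) matches the paper's argument.
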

\begin{proof}
Since the identity of the colors does not matter,
it is more convenient that instead of a partial $2$-edge coloring $\hat{\chi}$
we consider the corresponding partition $\pi_{\hat{\chi}}=(\hat{E_1},\hat{E_2})$.
For the sake of convenience, let $e=e^1e^2$ for $e\in (\hat{E_1}\cup \hat{E_2})$.
Note that the ends of $e$ may be labeled by different signs for $e\in (\hat{E_1}\cup \hat{E_2})$.
Given such a partial $2$-edge coloring $\hat{\chi}$ and a set of pairs $Q\subseteq (V\times V)$
where $Q$ contains nonadjacent vertex pairs.
Now we construct a graph $G'=(V',E')$ and define a set of pairs $P\subseteq (V'\times V')$ as follows.
We first add the vertices
\begin{align*}
\{c,b_1,b_2\}\cup \Big\{\{c^j_e,d^j_e,f^j_e\}: j\in\{1,2\}, e\in(\hat{E_1}\cup \hat{E_2})\Big\}
\end{align*}
and add the edges
\begin{align*}
\Big\{b_1c,b_2c\Big\}\cup \Big\{cc^j_e: j\in\{1,2\}, e\in (\hat{E_1}\cup \hat{E_2})\Big\}
\cup \Big\{ \{c^j_ef^j_e,c^j_ee^j,d^j_ee^j\}: e\in (\hat{E_1}\cup \hat{E_2})\Big\}.
\end{align*}
Now we define the set of pairs $P$.
\begin{align*}
P= & Q\cup \{b_1,b_2\}\cup \Big\{\{b_i,c^j_e\}: e\in \hat{E_i}, i,j\in \{1,2\}\Big\}\\
& \cup \Big\{\{f^j_e,c\},\{f^j_e,e^j\},\{d^j_e,c^j_e\},\{d^j_e,e^{(3-j)}\}: j\in \{1,2\}, e\in(\hat{E_1}\cup \hat{E_2})\Big\}.
\end{align*}
Then we secondly add the new vertices
\begin{align*}
\Big\{\{g_{(u,v,2)},g_{(u,v,3)},\ldots,g_{(u,v,k)}\}: \{u,v\}\in P\setminus Q\Big\}
\end{align*}
and add the new edges
\begin{align*}
\Big\{\{ug_{(u,v,2)}v,ug_{(u,v,3)}v,\ldots,ug_{(u,v,k)}v\}: \{u,v\}\in P\setminus Q\Big\}.
\end{align*}
On one hand, if there is a $3$-total-coloring of $\chi$ of $G$
that makes all the pairs in $Q$ total rainbow $k$-connected which extends $\pi_{\hat{\chi}}=(\hat{E_1},\hat{E_2})$
and $\chi(e)\notin \{\chi(e^1),\chi(e^2)\}$ for all $e=e^1e^2\in \hat{E}$,
then we give a total-coloring $\chi'$ of $G'$ as follows.
Suppose w.l.o.g that $\hat{E_1}$ are colored with $0$, and $\hat{E_2}$ are colored with $1$.
$\chi'(v)=\chi(v)$, and $\chi'(e)=\chi(e)$ for all $v\in V,e\in E$;
$\chi'(v)=2$ for all $v\in V'\setminus V$;
$\chi'(b_1c)=1$, and $\chi'(b_2c)=0$;
$\chi'(c^j_ee^j)=\chi'(c^j_ec)=0$, and $\chi'(d^j_ee^j)=\{1,2\}\setminus \chi(e^j)$ for all $e\in \hat{E_1}$;
$\chi'(c^j_ee^j)=\chi'(c^j_ec)=1$, and $\chi'(d^j_ee^j)=\{0,2\}\setminus \chi(e^j)$ for all $e\in \hat{E_2}$;
$\chi'(ug_{(u,v,t)})=0$, and $\chi'(g_{(u,v,t)}v)=1$ for all $2\leq t\leq k$ and all $\{u,v\}\in P\setminus Q$.
One can verify that this coloring indeed makes all the pairs in $P$ total rainbow $k$-connected.

On the other hand, any $3$-total-coloring of $G'$ that makes all the pairs in $P$ total rainbow $k$-connected
indeed makes all the pairs in $Q$ total rainbow $k$-connected in $G$,
because $G'$ contains no path of length $2$ between any pair in $Q$ that is not contained in $G$.
Note that there exactly exist $k$ disjoint total rainbow paths between any pair in $P\setminus Q$.
For any $e\in \hat{E_i}$, $i\in\{1,2\}$, from the set of pairs
$\Big\{\{b_1,b_2\}$, $\{b_i,c^j_e\}$, $\{f^j_e,c\}$, $\{f^j_e,e^j\}$, $\{d^j_e,c^j_e\}$, $\{d^j_e,e^{(3-j)}\}$: $j\in \{1,2\}\Big\}$,
we have $\chi'(b_1c)\neq \chi'(b_2c)$, $\chi'(e)=\chi'(c^j_ee^j)=\chi'(c^j_ec)=\chi'(b_{(3-i)}c)$
and $\chi'(e)\notin \{\chi'(e^1),\chi'(e^2)\}$ for $j\in \{1,2\}$.
Hence the coloring $\chi'$ of $G'$ not only provides a $3$-total-coloring $\chi$ of $G$
that makes all the pairs in $Q$ are total rainbow $k$-connected, but it also make sure that
$\chi$ extends the original partial coloring $\pi_{\hat{\chi}}=(\hat{E_1},\hat{E_2})$ and
$\chi(e)\notin \{\chi(e^1),\chi(e^2)\}$ for all $e=e^1e^2\in \hat{E}$.
\end{proof}
\begin{lem}\label{lem4}
$3$-SAT$\preceq$ Problem 3.
\end{lem}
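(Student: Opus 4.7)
The plan is to reduce $3$-SAT to Problem~3 by constructing, from a $3$-SAT instance with variables $x_1,\ldots,x_n$ and clauses $C_1,\ldots,C_m$, a triple $(G,Q,\hat{\chi})$ such that $\hat{\chi}$ extends to a valid $3$-total-coloring making every pair in $Q$ total rainbow $k$-connected if and only if the formula is satisfiable. The principal lever is the side condition $\chi(e)\notin\{\chi(u),\chi(v)\}$ for pre-colored edges $e=uv\in\hat{E}$: a pre-colored edge of colour $c$ forbids both endpoints from taking colour $c$, so by placing one or two pre-colored edges incident to a chosen vertex I can either restrict that vertex to a binary choice of colours or pin its colour entirely, and by the same token I can force a given edge's colour to differ from two prescribed vertex colours.

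First I would build a variable gadget for each $x_i$, centred on a literal vertex $\ell_i$ made incident to a pre-colored edge of colour $0$, so that $\chi(\ell_i)\in\{1,2\}$; here colour $1$ encodes \emph{true} and colour $2$ encodes \emph{false}. For every occurrence of $x_i$ in a clause I would introduce a copy $\ell_i^{(j)}$, again restricted to $\{1,2\}$ by an incident pre-colored edge, and link it to $\ell_i$ through a short ``equaliser'' gadget whose $Q$-pair admits a total rainbow path only when $\chi(\ell_i^{(j)})=\chi(\ell_i)$. The literal $\neg x_i$ is handled by a parallel ``complementer'' gadget that, through the same mechanism, forces its literal vertex to take the unique colour of $\{1,2\}$ opposite to $\chi(\ell_i)$.

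For each clause $C_j=(a\vee b\vee c)$ I would attach a clause gadget with a central vertex $w_j$ and a witness vertex $w_j^{\ast}$, together with three literal-connectors $w_j\mbox{–}a\mbox{–}w_j^{\ast}$, $w_j\mbox{–}b\mbox{–}w_j^{\ast}$, $w_j\mbox{–}c\mbox{–}w_j^{\ast}$, where the two edges of each connector are pre-colored so that the connector becomes a total rainbow path exactly when the literal vertex carries the colour of \emph{true}. The pair $\{w_j,w_j^{\ast}\}$ is placed in $Q$. To furnish the remaining $k-1$ total rainbow paths required for $k$-connectivity independently of the formula, I would graft, in the spirit of the $g_{(u,v,t)}$ construction in Lemma~\ref{lem3}, shunt vertices $g_2^{(j)},\ldots,g_k^{(j)}$ between $w_j$ and $w_j^{\ast}$, each with pre-colored incident edges ensuring that the two-edge path $w_j g_t^{(j)} w_j^{\ast}$ is automatically total rainbow for every $t$.

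The forward direction is then straightforward: from a satisfying assignment $\tau$, colour each $\ell_i$ according to $\tau$, colour the copies $\ell_i^{(j)}$ consistently through the equalisers, colour the connector of at least one true literal per clause so as to complete a total rainbow path to $w_j^{\ast}$, and fill in all free edges and vertices to obtain a valid $3$-total-coloring; the $Q$-pairs inside the variable and shunt gadgets are satisfied by the design of $\hat{\chi}$. Conversely, from any valid extension of $\hat{\chi}$ I would read off $\tau(x_i)$ from $\chi(\ell_i)$, and then the $k$ disjoint total rainbow $w_j$--$w_j^{\ast}$ paths must use at least one literal-connector (the shunts contribute only $k-1$ disjoint routes), forcing one literal of $C_j$ to be true. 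The step I expect to be the main obstacle is calibrating the pre-colourings of the connectors and shunts so that, for arbitrary fixed $k$, no unintended total rainbow $w_j$--$w_j^{\ast}$ path can arise by mixing edges across different connectors or across a connector and a shunt; avoiding such ``cross-gadget'' rainbow paths while still leaving enough colouring freedom for the satisfying case is where the delicate case analysis will lie, and it is handled by choosing the pre-colours so that the three connectors and the $k-1$ shunts each commit their internal vertex to a distinct colour pattern incompatible with any hybrid route.
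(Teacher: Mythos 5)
Your write-up is a plan rather than a proof, and the pieces on which the reduction actually hinges are never constructed. The ``equaliser'' and ``complementer'' gadgets are only described by the property they are supposed to have, and that property is problematic in itself: whether a path is total rainbow depends only on its \emph{edges and internal vertices}, never on its endpoints, and under a $3$-total-coloring any total rainbow path between nonadjacent vertices has length exactly $2$ (a path of length $3$ already carries $5$ colored elements). So a $Q$-pair can never ``read'' the colours of the two vertices you want to equalise unless both of them occur as internal vertices of the relevant paths, which a single $2$-path cannot accommodate; no mechanism for this is given. The clause connector has a similar calibration problem: only the two colours $0,1$ are available for pre-colouring, so if you pre-colour both edges $w_j\ell$ and $\ell w_j^{\ast}$ the connector is rainbow iff $\chi(\ell)=2$ --- but the side condition $\chi(e)\notin\{\chi(u),\chi(v)\}$ already forces exactly that, so the connector works regardless of the truth value; if you pre-colour only one edge, the free edge can always be recoloured to rescue the connector whenever $\chi(\ell)$ avoids the pre-colour, so ``rainbow iff the literal is true'' is again not achieved. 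Finally, for $k\geq 2$ \emph{every} pair you put into $Q$ must be joined by $k$ disjoint total rainbow paths, but you provide shunts only for the clause pairs $\{w_j,w_j^{\ast}\}$; the equaliser and complementer pairs are left with no second path, so the forward direction would already fail. The worry you flag about ``cross-gadget hybrid routes'' is, by contrast, a non-issue once one notes the length-$2$ observation above.

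For comparison, the paper's reduction avoids all of this by not duplicating literals at all. It takes one vertex $x_i$ per variable, one vertex $c_t$ per clause, a single source $s$ adjacent to every $x_i$, pre-colours each clause--literal edge $c_tx_i$ with $0$ or $1$ according to the sign of the occurrence, and puts only the pairs $\{s,c_t\}$ into $Q$; the $k-1$ extra disjoint paths are supplied by auxiliary vertices $c^j_t$ on $2$-paths $sc^j_tc_t$, exactly the role your shunts play. Since total rainbow paths have length $2$, the $k$-th path for $\{s,c_t\}$ must be some $sx_ic_t$, and rainbowness together with the constraint $\chi(x_ic_t)\notin\{\chi(x_i),\chi(c_t)\}$ forces $\{\chi(sx_i),\chi(x_i)\}=\{0,1,2\}\setminus\{\hat{\chi}(x_ic_t)\}$, from which a consistent satisfying assignment is read off directly (truth value $=\{\chi(sx_i),\chi(x_i)\}\cap\{0,1\}$, which depends only on $x_i$ and automatically satisfies the clause because of the sign encoding). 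If you want to salvage your approach you would have to actually exhibit equaliser, complementer and connector gadgets with the claimed properties and give each of their $Q$-pairs its own $k-1$ shunts; as it stands, the argument has a genuine gap at precisely the points you yourself identify as delicate.
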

\begin{proof}
Given a 3CNF formula $\phi=\bigwedge^m_{i=1}c_i$ over variables $\{x_1,x_2,\ldots,x_n\}$,
we construct a graph $G=(V,E)$, a partial $2$-edge coloring suppose w.l.o.g that $\hat{\chi}:\hat{E}\rightarrow\{0,1\}$,
and a set of pairs $Q\subseteq (V\times V)$ where $Q$ contains nonadjacent vertex pairs
such that there is an extension $\chi$ of $\hat{\chi}$
that makes all the pairs in $Q$ total rainbow $k$-connected and $\chi(e)\notin \{\chi(u),\chi(v)\}$ for all $e=uv\in \hat{E}$
if and only if $\phi$ is satisfiable.
We define $G$ as follows:
\begin{align*}
V(G)=\{c_t: t\in [m]\}\cup \{c^j_t,t\in [m], 2\leq j\leq k \}\cup  \{x_i: i\in [n]\}\cup \{s\}
\end{align*}
and
\begin{align*}
E(G)=\{c_tx_i: x_i\in c_t~\text{in}~\phi\}\cup \{sx_i: i\in [n]\}
\cup \Big\{\{sc^j_t,c^j_tc_t\}:t\in [m], 2\leq j\leq k\Big\}.
\end{align*}
Now we define the set of pairs $Q$ as follows:
\begin{align*}
Q=\Big\{\{s,c_t\}: t\in [m]\Big\}.
\end{align*}
Finally we define the partial $2$-edge coloring $\hat{\chi}$ as follows:
\begin{align*}
\hat{\chi}(c_tx_i)=
\begin{cases}
0 & \text{if $x_i$ is positive in $c_t$},\\
1 & \text{if $x_i$ is negative in $c_t$}.
\end{cases}
\end{align*}
On one hand, if $\phi$ is satisfiable with a truth assignment over $\{x_1,x_2,\ldots,x_n\}$,
we extend $\hat{\chi}$ to $\chi$ as follows:
$\chi(v)=2$ for all $v\in V$;
$\chi(sc^j_t)=0$, and $\chi(c^j_tc_t)=1$ for all $t\in [m]$ and all $2\leq j\leq k$;
$\chi(sx_i)=x_i$ for all $i\in [n]$.
One can verify $\chi$ is as desired.
On the other hand,
suppose that $\chi$ is as desired as above.
Note that for any $c_t$, there must exist a total rainbow path $sx_ic_t$ by some vertex $x_i$.
Set such $x_i=\{\chi(sx_i),\chi(x_i)\}\cap \{0,1\}$ which can make $c_t$ true.
One can verify $\phi$ is satisfiable.
\end{proof}

\end{document}